\pgfplotsset{
    tick label style={font=\small},
    label style={font=\small},
    legend style={font=\small}
}
\crefname{hypothesis}{Hypothesis}{Hypotheses}
\crefname{fact}{Fact}{Facts}
\title{Learning solutions of parameterized stiff ODEs using Gaussian processes\thanks{Submitted to the editors DATE.
\funding{This work is supported by the Graduate School CE within the Centre for Computational Engineering at Technische Universität Darmstadt and the ECSEL Joint Undertaking (JU) under grant agreement No. 101007319. The JU receives support from the European Union's Horizon 2020 research and innovation programme and the Netherlands, Hungary, France, Poland, Austria, Germany, Italy and Switzerland. Note that this work only reflects the authors' views and that the JU is not responsible for any use that may be made of the information it contains.}}}
\author{Idoia Cortes Garcia\thanks{Eindhoven University of Technology, Eindhoven, The Netherlands.}
\and Peter Förster\thanks{Technical University of Darmstadt, Darmstadt, Germany and Eindhoven University of Technology, Eindhoven, The Netherlands. (\email{peter.foerster@tu-darmstadt.de}).}
\and Wil Schilders\thanks{Eindhoven University of Technology, Eindhoven, The Netherlands.}
\and Sebastian Schöps\thanks{Technical University of Darmstadt, Darmstadt, Germany.}}
\begin{document}
\maketitle

% REQUIRED
\begin{abstract}
    Stiff ordinary differential equations (ODEs) play an important role in many scientific and engineering applications. Often, the dependence of the solution of the ODE on additional parameters is of interest, e.g.\ when dealing with uncertainty quantification or design optimization. Directly studying this dependence can quickly become too computationally expensive, such that cheaper surrogate models approximating the solution are of interest. One popular class of surrogate models are Gaussian processes (GPs). They perform well when approximating stationary functions, functions which have a similar level of variation along any given parameter direction, however solutions to stiff ODEs are often characterized by a mixture of regions of rapid and slow variation along the time axis and when dealing with such nonstationary functions, GP performance frequently degrades drastically. We therefore aim to reparameterize stiff ODE solutions based on the available data, to make them appear more stationary and hence recover good GP performance. This approach comes with minimal computational overhead and requires no internal changes to the GP implementation, as it can be seen as a separate preprocessing step. We illustrate the achieved benefits using multiple examples.
\end{abstract}

% REQUIRED
\begin{keywords}
    Gaussian processes, machine learning, stiff ordinary differential equations
\end{keywords}

% REQUIRED
\begin{MSCcodes}
    % 34-04 (Ordinary differential equations: Software)
    % 65-04 (Numerical analysis: Software)
    % 65B99 (Acceleration of convergence)
    % 65D05 (Numerical interpolation)
    % 65D15 (Algorithms for approximation of functions)
    65-04, 65D15
\end{MSCcodes}

\section{Introduction}
\label{sec:int}
In recent years, machine learning (ML) has established itself as a valuable tool in scientific computing, in particular in the context of design optimization, uncertainty quantification or inverse problems. Within this setting, we want to focus on approximating a specific class of functions, namely solutions of stiff ordinary differential equations (ODEs) over time. It is well-known from time integration literature, see e.g.\ \cite{hairer1996}, that working with stiff ODEs requires extra care compared to their non-stiff counterparts and the same also holds true in the ML context.

Three major families of ML methods that have found attention in scientific computing are classical polynomial based methods, Gaussian processes (GPs) and neural networks (NNs), each with specific advantages and drawbacks. Our approach is based on GPs, however its key components can also be used in conjunction with NNs or polynomial based methods if desired.

In order to introduce our approach and differentiate it from existing ones, we consider a generic, parameterized initial value problem
\begin{align}
    \begin{split}
        \dd{t} \mb{x}(t, \mb{p}) &= \mb{f}(\mb{x}, t, \mb{p}), \quad t \in [t_0, t_\mr{f}]\\
        \mb{x}_0(\mb{p}) &= \mb{x}(t_0, \mb{p}),
    \end{split}
    \label{eq:ode}
\end{align}
where $\mb{x} \in \mathbb{R}^{n_x}$ and $\mb{p} \in \mathbb{R}^{n_p}$. Systems of stiff ODEs of the form of \cref{eq:ode}, often excluding the extra parameter dependence $\mb{p}$, have already been tackled from a variety of perspectives in scientific ML. Broadly speaking, one can classify the existing approaches into two groups. The first group only requires observations of the solution $\mb{x}(t, \mb{p})$ or its time derivative $\mb{f}(\mb{x}, t, \mb{p})$, whereas the second group additionally requires explicit knowledge about the underlying model, i.e.\ the functional form of $\mb{f}(\mb{x}, t, \mb{p})$. A prominent example for the latter type of approach are physics-informed NNs \cite{raissi2019}. In their standard form, they are known to struggle with stiff ODEs and multiple ideas have been developed to combat this issue \cite{ji2021, nasiri2022, de_florio2022}. Similar ideas in the context of GPs can be found in \cite{skilling1992, tronarp2019, chkrebtii2016}.

If the precise form of $\mb{f}(\mb{x}, t, \mb{p})$ is not known, one has to resort to different approaches. In the context of NNs, neural ODEs \cite{chen2018} learn the time derivative $\mb{f}(\mb{x}, t, \mb{p})$ based only on observations of $\mb{x}(t, \mb{p})$. They were tailored towards stiff problems in \cite{kim2021, fronk2024} and extended to parameterized systems in \cite{lee2021}. Related ideas for GPs are given in \cite{heinonen2018, ridderbusch2021}. In cases where evaluating $\mb{f}(\mb{x}, t, \mb{p})$ is not the bottleneck, or in settings where time integration is too expensive or one simply has no access to a time integration routine, learning the solution $\mb{x}(t, \mb{p})$ directly is the only option.

We aim to directly learn the solution $\mb{x}(t, \mb{p})$, however standard GP implementations often run into issues when trying to learn functions with regions of rapid variation, such as solutions of stiff systems. This is due to their inherent stationarity; roughly speaking they expect a function to have a similar level of variation along a given parameter direction, no matter which region of the parameter you look at. Various ideas have been proposed to cope with this issue, e.g.\ nonstationary \cite{paciorek2003, plagemann2008} and warped GPs \cite{snelson2003, snoek2014, marmin2018}. While these methods were explicitly designed to deal with nonstationary functions more efficiently, they require initial guesses of functional forms, the estimation of numerous hyperparameters or sampling algorithms such as Markov chain Monte Carlo (MCMC), which may hinder their general applicability. There are no real equivalents to these approaches for NNs, since deep NNs do not face the issue of inherent stationarity. Nonetheless, they also struggle to learn functions with regions of rapid variation, which motivates the domain decomposition inspired approach described in \cite{dolean2024}. A similar work in the context of GPs is given by \cite{gramacy2008}. (In the ML community these kinds of approaches are also known as mixture of experts models.) Although very promising, two potential issues remain with this last set of approaches. Firstly, if used in conjunction with a design of experiments workflow, which is typical for GPs, it can become costly to continually construct new decompositions on the fly. Secondly, anisotropic decompositions with many levels may be needed to effectively deal with regions of rapid variation that have complicated shapes, see \cref{fig:tdo} for an example that will be picked up again in \cref{sec:ex}.

\begin{figure}
    \begin{center}
        \includegraphics[width=0.47\textwidth]{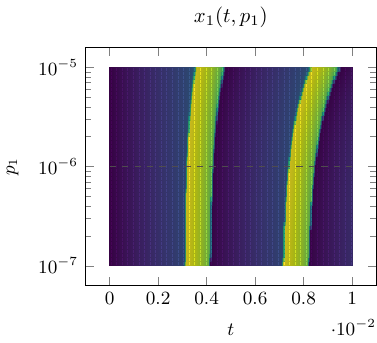} \hspace{\baselineskip} \includegraphics[width=0.47\textwidth]{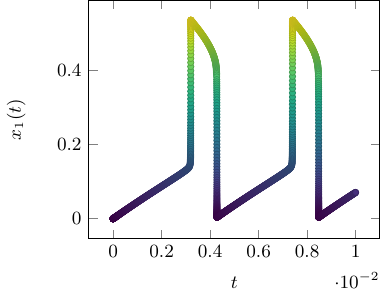}

        \vspace{0.3\baselineskip}
        \hspace{2.5\baselineskip} \includegraphics[width=0.47\textwidth]{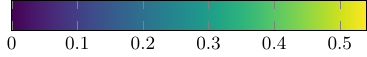}
    \end{center}
    \caption{Left: stiff ODE solution with regions of rapid variation that have complicated shapes. Right: one particular solution for $p_1 = 10^{-6}$ (indicated by the dashed line on the left) illustrating the extremely stiff behavior more closely.}
    \label{fig:tdo}
\end{figure}

We aim to solve this problem by combining ideas related to curve reparameterizations and dense output \cite{hairer1993} of time integration routines. More precisely, using a reparameterization $\tilde{\bs{\tau}}_\mb{p}^{-1}(s)$ we transform the solution curve $\mb{x}(t, \mb{p})$ into an approximately stationary one $\mbt{x}(s, \mb{p})$. This comes at the expense of having to learn two functions (the solution itself as well as the reparameterization), however we show that both of these functions are easier to learn by construction and we also prove that their smoothness, the key property for asymptotic convergence rate estimates, can be controlled by carefully choosing the reparameterization. The approach is similar in spirit to warped GPs, however it avoids the drawbacks of initial guesses of functional forms, extra hyperparameters or MCMC sampling by explicitly constructing the reparameterizations.

The following \cref{sec:gps} provides background knowledge on GPs, while \cref{sec:crs} introduces the reparameterizations and outlines our implementation. We then present examples showcasing the benefits of our approach in \cref{sec:ex}, before drawing conclusions in \cref{sec:cao}.

\section{Gaussian processes}
\label{sec:gps}
GPs are a probabilistic function approximation approach related to radial basis function interpolation \cite{benner2021}[Remark 9.8]. As such, one of their main advantages over other conventional function approximation approaches is their ability to seamlessly deal with unstructured data. Their probabilistic nature can also be beneficial for hyperparameter optimization or sequential experimental design, as will be discussed in more detail later. Similar to other kernel methods, GPs can converge rapidly depending on the smoothness of the approximated function and the given kernel \cite{fasshauer2012}. This makes them especially attractive in settings where acquiring data is expensive, in contrast to NNs, which often require comparably large amounts of data \cite{golestaneh2024}.

Before we introduce GPs more formally, let us fix the setting that we will be interested in later on. We will only consider learning a single solution component $x_i(t, \mb{p})$, $1 \leq i \leq n_x$ of the entire solution $\mb{x}(t, \mb{p})$ of \cref{eq:ode} at a time. While it is possible to use GPs to explicitly learn vector-valued functions \cite{bonilla2007}, such approaches often scale unfavorably in the number of components $n_x$, unless extra information about the relationships between the components is available. The case of linear relationships in particular is well understood, see e.g.\ \cite{swiler2020}, but for nonlinear relationships, as is the case for ODEs in general, there is no simple way to exploit extra knowledge, if it is present at all. Learning each component individually on the other hand is embarrassingly parallel, whereas most other parts of a GP workflow are sequential in nature. Furthermore, the solution data $\mb{x}(t, \mb{p})$, the computation of which is assumed to be the bottleneck, can be shared by all processes. We will therefore focus on this simplified setting in the following.

We now describe the key parts of a standard GP implementation; for a more complete overview see e.g.\ the textbook \cite{rasmussen2006}. For the purposes of illustration, let us first consider the one-dimensional case of learning a function (ODE solution) $x(t)$ based on observations $(t_i, x_i)$, $1 \leq i \leq N$. A GP for this setting is defined by (prior) mean $m: \mathbb{R} \to \mathbb{R}$ and covariance (kernel) functions $k: \mathbb{R} \times \mathbb{R} \to \mathbb{R}$. In practice, the mean function is often assumed to be zero as preprocessing measures such as standardization are common. A popular family of kernel functions is given by the Matérn kernels $k_\nu$, which vary in their smoothness from $\nu = 0$ all the way to $\nu = \infty$\footnote{In the literature, the parameter $\nu$ is often used slightly differently in the context of Matérn kernels, however we let it directly signify the smoothness of the kernel here, to keep a consistent and compact notation with the following sections.}. The two extreme cases coincide with the exponential ($k_0$) and Gaussian ($k_\infty$) kernels
\begin{align*}
    k_0(t, t') = \sigma^2 e^{-\frac{|t - t'|}{\theta}}, \qquad k_\infty(t, t') = \sigma^2 e^{-\frac{(t - t')^2}{2 \theta^2}},
\end{align*}
where $\sigma$ and $\theta$ are hyperparameters corresponding to the signal variance and length scale, respectively. Aside from the smoothness of the kernel, the length scale plays an important role in determining the approximation properties of the GP, which will become more apparent later.

\begin{figure}
    \begin{center}
        \begin{subfigure}{0.47\textwidth}
            \includegraphics[width=\textwidth]{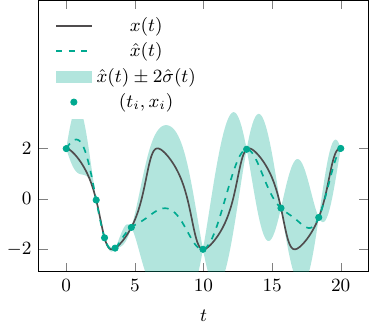}
            \caption{$p = 1$ and $N = 10$}
        \end{subfigure}
        \hspace{\baselineskip}
        \begin{subfigure}{0.47\textwidth}
            \includegraphics[width=\textwidth]{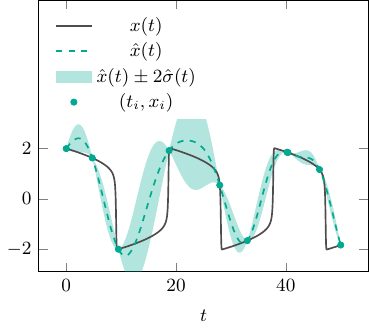}
            \caption{$p = 10$ and $N = 10$}
        \end{subfigure}

        \begin{subfigure}{0.47\textwidth}
            \includegraphics[width=\textwidth]{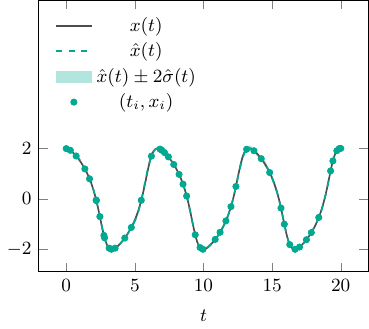}
            \caption{$p = 1$ and $N = 50$}
        \end{subfigure}
        \hspace{\baselineskip}
        \begin{subfigure}{0.47\textwidth}
            \includegraphics[width=\textwidth]{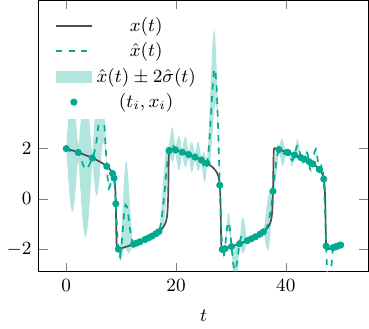}
            \caption{$p = 10$ and $N = 50$}
        \end{subfigure}
    \end{center}
    \caption{Exemplary sequential designs based on the $k_\infty$ Matérn (Gaussian) kernel for the VDPO with $p = 1$ (left) and $p = 10$ (right). The plots show the original solution $x(t)$, the prediction $\hat{x}(t)$, the posterior standard deviation $\hat{\sigma}(t)$ as well as the observations used for training $(t_i, x_i)$. The plots on the right illustrate the issues for stiff solutions.}
    \label{fig:conventional_doe}
\end{figure}

Being probabilistic models, GPs possess two key features that distinguish them from deterministic approaches such as radial basis function interpolation. First off, they do not only provide point predictions, but an entire posterior distribution, the mean $\hat{m}$ and covariance $\hat{k}$ of which are given by
\begin{align}
    \begin{split}
        \hat{m}(t) &= \mb{k}(t)^\T \mb{K}^{-1} \mb{x}\\
        \hat{k}(t, t') &= k(t, t') - \mb{k}(t)^\T \mb{K}^{-1} \mb{k}(t'),
    \end{split}
    \label{eq:posterior}
\end{align}
where we write $k_i(t) = k(t, t_i)$ for the entries of $\mb{k}(t)$ and $k_{ij} = k(t_i, t_j)$ for the entries of $\mb{K}$. The mean prediction $\hat{m}$ coincides with the radial basis function interpolant and in the following we will refer to it simply as the prediction and write $\hat{x}$ instead of $\hat{m}$. The second feature is the ability to analytically compute the log marginal likelihood $L$ and its derivatives w.r.t.\ the hyperparameters $\bs{\theta} = [\sigma, \theta]^\T$
\begin{align*}
    L(\bs{\theta}) &= -\frac{1}{2} \mb{x}^\T \mb{K}_{\bs{\theta}}^{-1} \mb{x} - \frac{1}{2} \log(\det \mb{K}_{\bs{\theta}}) - \frac{N}{2} \log(2 \pi)\\
    \pp{\theta_i} L(\bs{\theta}) &= \frac{1}{2} \big( \mb{K}_{\bs{\theta}}^{-1} \mb{x} \big)^\T \left( \pp{\theta_i} \mb{K}_{\bs{\theta}} \right) \mb{K}_{\bs{\theta}}^{-1} \mb{x} - \frac{1}{2} \tr \left( \mb{K}_{\bs{\theta}}^{-1} \pp{\theta_i} \mb{K}_{\bs{\theta}} \right),
\end{align*}
where we explicitly express the dependence of $\mb{K} = \mb{K}_{\bs{\theta}}$ on the hyperparameters. This allows for efficient gradient-based optimization of the hyperparameters.

Often, the probabilistic nature of GPs is further exploited by using the variance estimate in \cref{eq:posterior} or related design criteria \cite{marmin2018, harbrecht2021, burnaev2015} to formulate a sequential (adaptive) design of experiments, where only a few initial observations $(t_i, x_i)$ are provided to start off the design and additional observations are computed as requested by the design criterion. The aim of such a design is to avoid computing an unnecessarily large number of solutions to \cref{eq:ode}, as we assume a single solution to be very costly. An abstract workflow description for such an experimental design is as follows:
\begin{enumerate}
    \item select the initial observations,
    \item optimize the hyperparameters,
    \item evaluate the error estimate,
    \item optimize the design criterion to determine the next sample point,
    \item repeat steps 2-4 until the error estimate is below a desired tolerance.
\end{enumerate}
The left side of \cref{fig:conventional_doe} shows two intermediate steps of such a design for the solution of the following van der Pol oscillator (VDPO) \cite{hairer1993} with $p = 1$
\begin{align}
    \begin{split}
        \frac{\mr{d}^2}{\mr{d} t^2} x - p (1 - x^2) \dd{t} x + x = 0, \qquad x(0) = 2, \qquad \dd{t} x(0) = 0.
    \end{split}
    \label{eq:vdpo}
\end{align}

Extending the above workflow to the multidimensional case is straightforward via the use of product kernels
\begin{align*}
    k(\mb{p}, \mb{p}') = \sigma^2 \prod_{i=1}^n k_{\nu_i, \theta_i}(p_i, p_i'),
\end{align*}
where $n$ is the dimension of the input and the $k_{\nu_i, \theta_i}$ are one-dimensional kernels of smoothness $\nu_i$. Other extensions than this simple product construction are possible \cite{rasmussen2006}, however we will focus on these kinds of kernels in the following. The left of \cref{fig:conventional_errors} shows a comparison of the convergence of the relative $l^2$ error for designs based on kernels with varying smoothness $\nu$, for the VDPO with $p = 1$ as introduced in \cref{eq:vdpo}. While there is a visible difference between the designs based on low smoothness kernels ($\nu \leq 1$), the designs for $\nu \geq 2$ perform very similar.

\begin{figure}
    \begin{center}
        \includegraphics[width=0.47\textwidth]{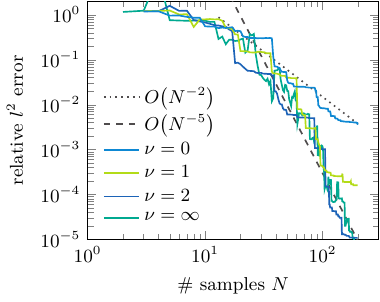} \hspace{\baselineskip} \includegraphics[width=0.47\textwidth]{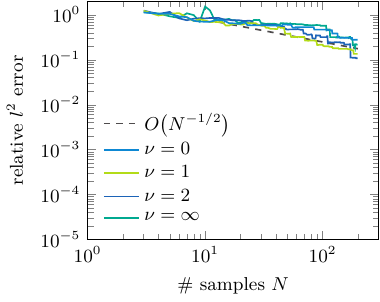}
    \end{center}
    \caption{Convergence of the sequential designs, both for the non-stiff case with $p = 1$ (left) and the stiff case with $p = 10$ (right).}
    \label{fig:conventional_errors}
\end{figure}

As an illustration for the kinds of problems that GPs face when trying to learn stiff solutions, we again consider the VDPO, this time however with $p = 10$. The increase in $p$ turns the problem into a stiff one, as is evident in the solution trajectories plotted on the right of \cref{fig:conventional_doe}. The bottom right plot also showcases the issue that the design runs into: although the predictions get better over large parts of the input domain, they fail to improve and even worsen in others, especially surrounding the regions of rapid variation. In essence this is due to the stationarity of the kernel, as a single length scale cannot simultaneously account for the regions of rapid and slow variation. The plot on the right hand side of \cref{fig:conventional_errors} shows that this holds independently of the kernel smoothness. Compared to the non-stiff case on the left, we lose somewhere between a factor of 4 to 10 in the rate of convergence.

\section{Curve reparameterizations}
\label{sec:crs}
As hinted at in the introduction, our approach to alleviating the issues described at the end of the previous \cref{sec:gps} is to reparameterize a solution component $x(t, \mb{p})$, such that we end up with a reparameterized solution $\tilde{x}(s, \mb{p})$ that is approximately stationary. More precisely, we want to bound the absolute value of the derivative w.r.t.\ the first variable, $\left\vert \dd{s} \tilde{x}(s, \mb{p}) \right\vert$, as this corresponds to the variation along the time direction. The key idea then is, that a tool for this already exists, namely reparameterizing the solution curve $[t,\ x(t, \mb{p})]^\T$ by arc length. To make this more explicit, we first recapitulate some basic facts about arc length reparameterizations of two-dimensonal curves.

Again dropping the parameter dependence at first, as in \cref{sec:gps}, we consider a function $x(t): [t_0, t_\mr{f}] \to \mathbb{R}$ that represents our stiff ODE solution, cf.\ \cref{eq:ode}. The arc length $\tau(t)$ of the curve $[t,\ x(t)]^\T$ is then defined as
\begin{align}
    \tau(t) = \int_{t_0}^t \sqrt{1 + \left( \dd{s} x(s) \right)^2}\, \mr{d}s = \int_{t_0}^t \sqrt{1 + f \big( x(s), s \big)^2}\, \mr{d}s, \quad t \in [t_0, t_\mr{f}].
    \label{eq:arclength}
\end{align}
Using the inverse function rule, this already gives
\begin{align*}
    \left\vert \dd{s} x \circ \tau^{-1}(s) \right\vert = \left\vert \frac{f \big( x \circ \tau^{-1}(s), \tau^{-1}(s) \big)}{\sqrt{1 + f \big( x \circ \tau^{-1}(s), \tau^{-1}(s) \big)^2}} \right\vert \leq 1,
\end{align*}
where the inverse $\tau^{-1}(s)$ exists for any $s \in [0, \tau(t_\mr{f})]$, which directly yields the desired result of bounding the derivative w.r.t.\ the first variable. In order to be able to always consider $s \in [0, 1]$ as the domain, we introduce a second map $\sigma: [0, 1] \to [0, \tau(t_\mr{f})]$. An important condition on $\sigma$ is that it be strictly monotonically increasing, so that the composition $\tau^{-1} \circ \sigma$ remains a valid reparameterization. For now, we simply choose a linear rescaling $\sigma(s) \coloneqq \tau(t_\mr{f})\, s$. This allows us to introduce
\begin{align*}
    \tilde{\tau}^{-1}(s) &\coloneqq \tau^{-1} \circ \sigma(s)\\
    \tilde{x}(s) &\coloneqq x \circ \tilde{\tau}^{-1}(s)\\
    \tilde{f}(s) &\coloneqq f \big( \tilde{x}(s), \tilde{\tau}^{-1}(s) \big),
\end{align*}
which yields
\begin{align}
    \left\vert \dd{s} \tilde{x}(s) \right\vert = \left\vert \sigma'(s) \frac{\tilde{f}(s)}{\sqrt{1 + \tilde{f}(s)^2}} \right\vert \leq \sigma'(s).
    \label{eq:xtp}
\end{align}

\Cref{fig:vdpo_reparameterization} illustrates the reparameterizations obtained using a linear $\sigma$ as described above. The zoom-in on the right shows a kink-like behavior that appears at the extreme points of the reparameterized solution in the stiff case, although this behavior is not present in the original solution (in light grey). This phenomenom can be explained by looking more closely at the left side of the inequality in \cref{eq:xtp}. For large derivatives $|\tilde{f}(s)| \gg 1$, we find
\begin{align*}
    \dd{s} \tilde{x}(s) \approx \sigma'(s) \frac{\tilde{f}(s)}{|\tilde{f}(s)|}.
\end{align*}
Now, it is important to notice that the derivative $|\tilde{f}|$ itself grows rapidly, such that the approximation also remains valid for points close to the roots of $\tilde{f}$. Thus while $\tilde{x}$ is smooth analytically, depending on the smoothness of $\tilde{f}$ of course, we may still observe numerical artifacts as highlighted in \cref{fig:vdpo_reparameterization}.

\begin{figure}
    \begin{center}
        \includegraphics[width=0.45\textwidth]{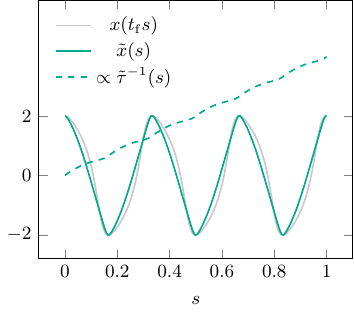} \hspace{\baselineskip} \includegraphics[width=0.45\textwidth]{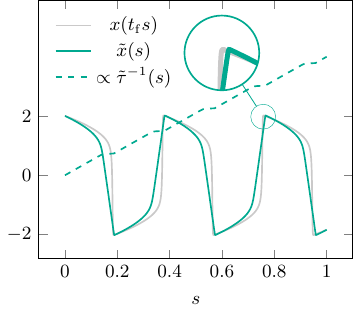}
    \end{center}
    \caption{Reparameterized solutions $\tilde{x}(s)$ using the linear definition of $\sigma$, both for the non-stiff case (left) and the stiff case (right). The original solutions are indicated in light grey with the reparameterized solutions in solid green and the reparameterizations $\tilde{\tau}^{-1}(s)$ as dashed green lines. The reparameterizations are scaled by a factor to fit the plots.}
    \label{fig:vdpo_reparameterization}
\end{figure}

\subsection{Controlling smoothness}
A simple way of addressing these numerical artifacts is given by the ideas of removable singularites from complex analysis or pole cancelation from control theory, respectively. Both amount to the same strategy of choosing $\sigma$ in such a way, that the roots of the Taylor expansion of $\sigma'$ cancel out the roots of the Taylor expansion of $\tilde{f}$. One straightforward approach is to define $\sigma$ using Hermite splines, such that $\sigma'$ has zeros of a given order at the roots of $\tilde{f}$. In practice the cancellation is of course not exact, due to machine precision and not being able to exactly compute the roots, but since $\tilde{x}$ is also not truly singular at these points, moderate precision suffices to achieve visibly and numerically acceptable results, see \cref{fig:vdpo_reparameterizations_splines} now and \cref{fig:vdpo_reparameterized_errors} in \cref{sec:ex} later.

\begin{figure}
    \begin{center}
        \begin{subfigure}{\textwidth}
            \includegraphics[width=0.45\textwidth]{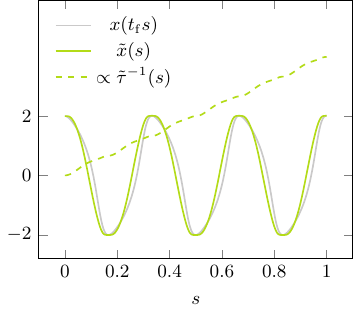} \hspace{\baselineskip} \includegraphics[width=0.45\textwidth]{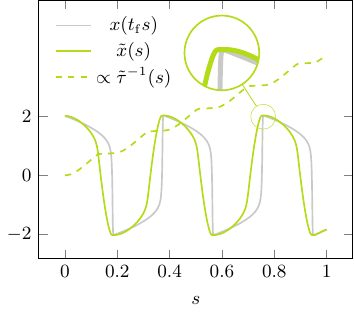}
            \caption{using $C^1$ cubic Hermite splines}
        \end{subfigure}

        \vspace{0.5\baselineskip}
        \begin{subfigure}{\textwidth}
            \includegraphics[width=0.45\textwidth]{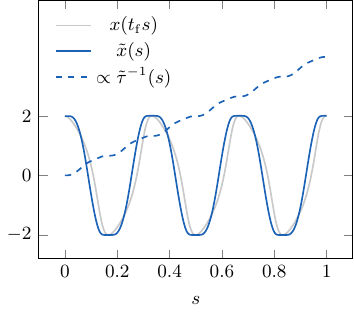} \hspace{\baselineskip} \includegraphics[width=0.45\textwidth]{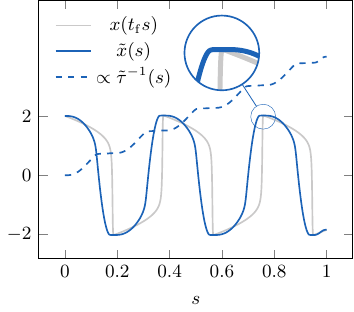}
            \caption{using $C^2$ quintic Hermite splines}
        \end{subfigure}
    \end{center}
    \caption{Reparameterized solutions using Hermite cubic and quintic splines for $\sigma$, arranged analogously to \cref{fig:vdpo_reparameterization}.}
    \label{fig:vdpo_reparameterizations_splines}
\end{figure}

When canceling only up to first order e.g., we can use a set of cubic Hermite splines \cite{curry1988} $H \in C^1$, uniquely defined by the following interpolation conditions
\begin{alignat*}{9}
    H(0) &= 0, &\quad H(s_{\mr{ext}_1}) &= \tau(t_{\mr{ext}_1}), &\quad H(s_{\mr{ext}_2}) &= \tau(t_{\mr{ext}_2}), &\quad &\dots &\quad H(1) &= \tau(t_\mr{f})\\
    H'(0) &= 0, &\quad H'(s_{\mr{ext}_1}) &= 0, &\quad H'(s_{\mr{ext}_2}) &= 0, &\quad &\dots &\quad H'(1) &= 0,
\end{alignat*}
where $t_{\mr{ext}_1}, t_{\mr{ext}_2}, \dots$ are the locations of the extreme points of $x(t)$ and $s_{\mr{ext}_i} = \frac{t_{\mr{ext}_i} - t_0}{t_\mr{f} - t_0}$. We only consider the extreme points of $x(t)$, as this already suffices to determine those of $\tilde{x}(s)$, since
\begin{align*}
    \sigma(s_{\mr{ext}_i}) = H(s_{\mr{ext}_i}) = \tau(t_{\mr{ext}_i})
\end{align*}
by construction and thus
\begin{align*}
    \tilde{\tau}^{-1}(s_{\mr{ext}_i}) = \tau^{-1} \circ \sigma(s_{\mr{ext}_i}) = t_{\mr{ext}_i}.
\end{align*}
The extreme points of $x$ and $\tilde{x}$ therefore match, compare also \cref{fig:vdpo_reparameterizations_splines}. As stated earlier, $\sigma$ is also constrained to be strictly monotonically increasing. \Cref{prop:cubic_spline} ensures that the above defined cubic Hermite splines indeed fulfill this condition. To achieve cancellation up to second order using the same strategy, we can make use of quintic Hermite splines $H \in C^2$ to also specify the second derivatives of $\sigma$ at the extreme points. The setup remains almost identical and a proof of strict monotonicity in this case is given in \cref{prop:quintic_spline}.

As indicated in \cref{sec:gps}, the smoothness of $\tilde{x}$ has a strong influence on the convergence rate when learning, and the different choices for $\sigma$ of course impact this as well. The following short proposition describes this relationship; the proof is provided in \cref{pr:smoothness_1D}.
\begin{proposition}
    \label{prop:smoothness_1D}
    Assuming $x \in C^\nu$ and $\sigma \in C^\mu$, we find $\tilde{x} \in C^{\min(\nu, \mu)}$ for the reparameterized solution and $\tilde{\tau}^{-1} \in C^{\min(\nu, \mu)}$ for the reparameterization.
\end{proposition}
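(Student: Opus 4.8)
The plan is to read off $\tilde{x} = x \circ \tau^{-1} \circ \sigma$ as a three-fold composition and to track how much regularity survives each step: building the arc length integral $\tau$ from $x$, inverting it, and composing with $\sigma$. Beyond the chain rule and the fundamental theorem of calculus, the only ingredients are two standard facts: (i) the inverse of a $C^\nu$ diffeomorphism between intervals is again $C^\nu$, and (ii) if $g \in C^a$ and $h \in C^b$ with the range of $h$ inside the domain of $g$, then $g \circ h \in C^{\min(a,b)}$. For (ii), setting $n = \min(a,b)$ so that $g, h \in C^n$, one proceeds by induction on $n$ using $(g\circ h)' = (g'\circ h)\,h'$: the inductive hypothesis gives $g'\circ h \in C^{n-1}$ and $h' \in C^{n-1}$, so the product, hence $(g\circ h)'$, lies in $C^{n-1}$ and therefore $g\circ h \in C^n$. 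Fact (i) is obtained analogously, bootstrapping the regularity of $\tau^{-1}$ from $C^1$ up to $C^\nu$ via $(\tau^{-1})' = 1/(\tau' \circ \tau^{-1})$.

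\emph{Step 1: regularity of $\tau$.} Since $x \in C^\nu$ (with $\nu \geq 1$ implicit, as otherwise the arc length in \cref{eq:arclength} is undefined), its derivative $\dd{t} x = f(x(t),t)$ lies in $C^{\nu-1}$; the map $u \mapsto \sqrt{1+u^2}$ is real-analytic, so the integrand $\sqrt{1 + \left( \dd{t}x \right)^2}$ is in $C^{\nu-1}$, and by the fundamental theorem of calculus its antiderivative $\tau$ is in $C^\nu$. Since $\tau'(t) = \sqrt{1 + f(x(t),t)^2} \geq 1 > 0$ on $[t_0, t_\mr{f}]$, the function $\tau$ is a strictly increasing $C^\nu$ bijection onto $[0, \tau(t_\mr{f})]$ with nowhere vanishing derivative, so fact (i) yields $\tau^{-1} \in C^\nu$ on $[0, \tau(t_\mr{f})]$.

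\emph{Step 2: the two compositions.} By construction $\sigma$ maps $[0,1]$ into $[0, \tau(t_\mr{f})]$, the domain of $\tau^{-1}$, so fact (ii) applied with $g = \tau^{-1} \in C^\nu$ and $h = \sigma \in C^\mu$ gives $\tilde{\tau}^{-1} = \tau^{-1}\circ\sigma \in C^{\min(\nu,\mu)}$. Applying fact (ii) once more with $g = x \in C^\nu$ and $h = \tilde{\tau}^{-1} \in C^{\min(\nu,\mu)}$ gives $\tilde{x} = x \circ \tilde{\tau}^{-1} \in C^{\min(\nu,\,\min(\nu,\mu))} = C^{\min(\nu,\mu)}$, which is precisely the assertion; the usual convention $\min(\nu, \infty) = \nu$ covers the $C^\infty$ (Gaussian kernel) cases.

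The result carries no genuine obstacle — it is essentially bookkeeping of regularity under antidifferentiation, inversion and composition. The points that require a little care are making facts (i) and (ii) precise at the endpoints of the intervals (one-sided derivatives) and for infinite smoothness indices, and verifying that $\sigma$ really maps into $[0, \tau(t_\mr{f})]$ so that every composition is well defined; for the explicit choices of $\sigma$ in the paper (the linear rescaling, or the monotone cubic and quintic Hermite splines) the latter follows from their interpolation conditions together with the monotonicity guaranteed by \cref{prop:cubic_spline} and \cref{prop:quintic_spline}.
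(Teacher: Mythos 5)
Your proof is correct and follows essentially the same route as the paper's: the inverse function rule combined with tracking regularity through the compositions $\tau^{-1}\circ\sigma$ and $x\circ\tilde{\tau}^{-1}$. The paper's version is simply a compressed form of your argument---it writes $\dd{s}\tilde{\tau}^{-1}(s) = \sigma'(s)/\tau'(t)\vert_{t=\tilde{\tau}^{-1}(s)}$ and reads the smoothness off directly, leaving implicit the bootstrapping and composition facts that you spell out in Steps 1 and 2.
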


\subsection{Extension to multiple dimensions}
With an understanding of the one-dimensional case, the most important remaining question is how to extend the approach to the multidimensional setting. We first extend the arclength definition from \cref{eq:arclength} to obtain
\begin{align*}
    \tau_\mb{p}(t) = \int_{t_0}^t \sqrt{1 + \left( \pp{s} x(s, \mb{p}) \right)^2}\, \mr{d}s = \int_{t_0}^t \sqrt{1 + f \big( x(s, \mb{p}), s, \mb{p} \big)^2}\, \mr{d}s, \quad t \in [t_0, t_\mr{f}],
\end{align*}
where we purposefully disregard derivatives w.r.t.\ the parameters $\mb{p}$ under the square root. The inverse $\tau_\mb{p}^{-1}(s)$ of this map w.r.t.\ $t$ then takes the role of $\tau^{-1}(s)$ from before and we can again compose it with analogous extensions $\sigma_\mb{p}$, of any of the previously defined $\sigma$, to find
\begin{align}
    \begin{split}
        \tilde{\tau}_\mb{p}^{-1}(s) \coloneqq \tau_\mb{p}^{-1} \circ \sigma_\mb{p}(s)\\
        \tilde{x}(s, \mb{p}) \coloneqq x \big( \tilde{\tau}_\mb{p}^{-1}(s), \mb{p} \big).
    \end{split}
    \label{eq:reparameterization}
\end{align}
This further illustrates the benefit of using the $\sigma_\mb{p}$ for obtaining a uniform domain for learning both $\tilde{x}$ and $\tilde{\tau}_\mb{p}^{-1}$. As an example, \cref{fig:vdpo_reparameterizations_2D} shows the original and reparameterized solutions, as well as the reparameterizations, for two different intervals of the parameter $p$ for the VDPO from \cref{eq:vdpo} using the linear definition of $\sigma$. The left side shows the non-stiff case, where we observe only moderate change, as is to be expected by comparing with \cref{fig:vdpo_reparameterization}. The right side however shows the stiff case, where we can clearly see the effect of the reparameterization on $\tilde{x}$ in the second plot.

\begin{figure}
    \begin{center}
        \begin{subfigure}{0.47\textwidth}
            \includegraphics[width=\textwidth]{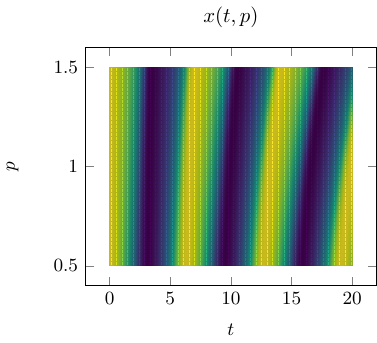}
        \end{subfigure}
        \hspace{0.3\baselineskip}
        \begin{subfigure}{0.47\textwidth}
            \includegraphics[width=\textwidth]{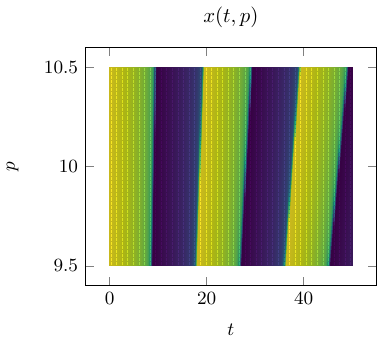}
        \end{subfigure}

        \begin{subfigure}{0.47\textwidth}
            \includegraphics[width=\textwidth]{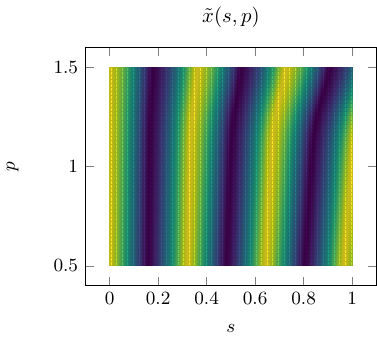}
        \end{subfigure}
        \hspace{0.3\baselineskip}
        \begin{subfigure}{0.47\textwidth}
            \includegraphics[width=\textwidth]{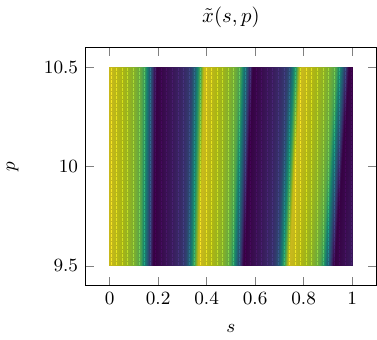}
        \end{subfigure}

        \begin{subfigure}{0.47\textwidth}
            \includegraphics[width=\textwidth]{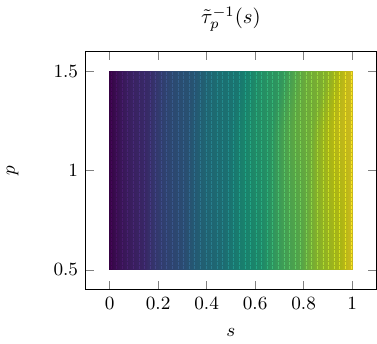}
        \end{subfigure}
        \hspace{0.3\baselineskip}
        \begin{subfigure}{0.47\textwidth}
            \includegraphics[width=\textwidth]{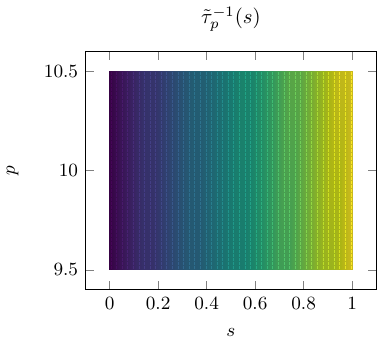}
        \end{subfigure}

        \vspace{0.3\baselineskip}
        \hspace{2.5\baselineskip} \includegraphics[width=0.47\textwidth]{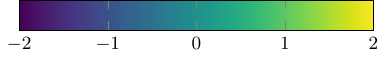}
    \end{center}
    \caption{From top to bottom: original solutions $x$, reparameterized solutions $\tilde{x}$ and reparameterizations $\tilde{\tau}_p^{-1}$ for both the non-stiff case (left) and the stiff case (right).}
    \label{fig:vdpo_reparameterizations_2D}
\end{figure}

The multidimensional case complicates the smoothness considerations slightly. The overall statements remain similar to \cref{prop:smoothness_1D} however, as captured in the following proposition. (We consider only one parameter $p$ for notational convenience and the proof is provided in \cref{pr:smoothness}.)
\begin{proposition}
    \label{prop:smoothness}
    Assuming $x(\cdot, p) \in C^\nu$, $f(x, t, \cdot) \in C^\kappa$ and $\sigma_{p}(\cdot) \in C^\mu$, we find $\tilde{x}(\cdot, p) \in C^{\min(\nu, \mu)}$ and $\tilde{x}(s, \cdot) \in C^\kappa$ for the reparameterized solution, as well as $\tilde{\tau}_p^{-1}(\cdot) \in C^{\min(\nu, \mu)}$ and $\tilde{\tau}_{\cdot}^{-1}(s) \in C^\kappa$ for the reparameterization, where $\cdot$ is used to denote variable arguments and explicitly stated arguments are considered fixed.
\end{proposition}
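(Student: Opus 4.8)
The plan is to treat the two directions of differentiation separately, since the hypotheses essentially decouple them. For the $s$-direction, i.e.\ the claims $\tilde{x}(\cdot, p) \in C^{\min(\nu,\mu)}$ and $\tilde{\tau}_p^{-1}(\cdot) \in C^{\min(\nu,\mu)}$, the argument is \cref{prop:smoothness_1D} with $p$ frozen: for fixed $p$ the integrand $\sqrt{1 + f(x(\cdot,p),\cdot,p)^2}$ lies in $C^{\nu-1}$, because $f(x(\cdot,p),\cdot,p) = \partial_t x(\cdot,p) \in C^{\nu-1}$ and $y \mapsto \sqrt{1+y^2}$ is smooth, so $\tau_p \in C^\nu$; since $\tau_p' = \sqrt{1+f^2} \ge 1 > 0$, the map $\tau_p$ is a $C^\nu$ diffeomorphism onto its image and $\tau_p^{-1} \in C^\nu$; composing with the strictly increasing $\sigma_p \in C^\mu$ gives $\tilde{\tau}_p^{-1} = \tau_p^{-1} \circ \sigma_p \in C^{\min(\nu,\mu)}$, and composing once more with $x(\cdot,p) \in C^\nu$ gives $\tilde{x}(\cdot,p) \in C^{\min(\nu,\mu)}$. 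This reproduces verbatim the reasoning already used for \cref{prop:smoothness_1D}.

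The new content is the $p$-direction. First I would invoke the standard theorem on differentiable dependence of solutions of \cref{eq:ode} on parameters: from $f(x,t,\cdot) \in C^\kappa$ (with enough joint regularity in $x$ for the variational equations to be well posed) one obtains $x(s,\cdot) \in C^\kappa$, and in fact $x$ jointly regular in $(t,p)$. Consequently $p \mapsto \sqrt{1 + f(x(s,p),s,p)^2}$ is $C^\kappa$ by the chain rule, and $\tau_{(\cdot)}(t) \in C^\kappa$ by differentiation under the integral sign, the required domination following from continuity of the $p$-derivatives of the integrand on the compact interval $[t_0, t_\mr{f}]$. Next, $\tilde{\tau}_p^{-1}(s)$ is characterized implicitly by $\tau_p\big(\tilde{\tau}_p^{-1}(s)\big) = \sigma_p(s)$; since $\partial_t \tau_p = \sqrt{1+f^2} \ge 1 \ne 0$, the implicit function theorem transfers the $C^\kappa$ dependence of $\tau_p$ and of $\sigma_p(s)$ on $p$ to $\tilde{\tau}_{(\cdot)}^{-1}(s)$, giving $\tilde{\tau}_{(\cdot)}^{-1}(s) \in C^\kappa$. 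Finally $\tilde{x}(s,\cdot) = x\big(\tilde{\tau}_{(\cdot)}^{-1}(s), \cdot\big)$ is $C^\kappa$ by composing the joint $C^\kappa$ regularity of $x$ with $\tilde{\tau}_{(\cdot)}^{-1}(s) \in C^\kappa$.

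The main obstacle, and the only place where genuine care is needed, is the $p$-regularity of $\sigma_p(s)$: the proposition assumes $\sigma_p(\cdot) \in C^\mu$ only in $s$, so one must verify separately that each admissible construction of $\sigma$ depends smoothly on $p$. For the linear rescaling $\sigma_p(s) = \tau_p(t_\mr{f})\,s$ this is immediate from $\tau_{(\cdot)}(t_\mr{f}) \in C^\kappa$; for the Hermite-spline constructions one additionally needs the extreme-point locations $t_{\mr{ext}_i}$ and the values $\tau_p(t_{\mr{ext}_i})$ to vary smoothly with $p$, which holds away from bifurcations of the extreme points by applying the implicit function theorem to $\partial_t x(t,p) = 0$. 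A secondary subtlety is that $x$ carries the parameter in two argument slots, so the closing composition requires joint rather than separate smoothness of $x$ in $(t,p)$ — again supplied by the parameter-dependence theorem, but worth stating explicitly. Apart from these points the proof is a routine chaining of the inverse and implicit function theorems with differentiation under the integral sign.
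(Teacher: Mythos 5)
Your proposal is correct and follows essentially the same route as the paper's proof: the $s$-direction claims are exactly the one-dimensional argument of \cref{prop:smoothness_1D} with $p$ frozen, and the $p$-direction claims are obtained by transferring the $C^\kappa$ dependence of $\tau_p$ and $\sigma_p$ on $p$ to $\tilde{\tau}_p^{-1}$ via an inverse/implicit function theorem argument and then composing with $x$. The differences are ones of care rather than of strategy, and they favor your version. The paper disposes of the warp's parameter dependence by asserting that $\sigma_p$ is linear in $\tau_p$ ``with no further dependence on $p$'' and then reads off $\tilde{\tau}_{\cdot}^{-1}(s) \in C^\kappa$ from a one-line formula, whereas you (i) make explicit that the conclusion needs $x$ to be jointly $C^\kappa$ in $(t,p)$, which does not follow from the stated hypotheses alone but from the standard parameter-dependence theorem for \cref{eq:ode}; (ii) justify $\tau_{\cdot}(t) \in C^\kappa$ by differentiation under the integral sign; and (iii) correctly note that the Hermite-spline variants of $\sigma_p$ do carry additional $p$-dependence through the knot locations $s_{\mathrm{ext}_i}(p)$ and the values $\tau_p\bigl(t_{\mathrm{ext}_i}(p)\bigr)$, which must be controlled by an implicit function theorem applied to $\partial_t x(t,p)=0$ at nondegenerate extrema --- a point the paper's proof glosses over. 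Your implicit differentiation of $\tau_p\bigl(\tilde{\tau}_p^{-1}(s)\bigr)=\sigma_p(s)$, which yields $\partial_p \tilde{\tau}_p^{-1} = \bigl(\partial_p \sigma_p - \partial_p \tau_p\bigr)/\partial_t \tau_p$, is also the correct form of the identity that the paper states more loosely with $\partial_p \tau_p$ alone in the denominator. In short: same architecture, but your write-up closes the small gaps the published proof leaves open.
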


Before summarizing what the just defined concepts look like in terms of a numerical implementation, we give a brief remark to further distinguish our approach from existing methods based on warping.

\begin{remark}[Relation to warping]
    Warping of GP models can be grouped into two types. Classically, as described in\ \cite{snelson2003}, the warping acts on the \emph{solution}, i.e.\ the model learns $w \circ x(t, \mb{p})$ instead of $x(t, \mb{p})$ directly, where $w$ is an appropriately smooth, invertible function. In the second type, explored for instance in\ \cite{snoek2014, marmin2018}, the warping is applied to the \emph{input}, so that the model learns $x \circ w(t, \mb{p})$. In both cases, the warping function $w$ must be explicitly prescribed---typically as a linear combination of nonlinear basis functions such as $\tanh$---and involves additional hyperparameters that are optimized on top of the standard GP workflow.

    Our approach warps the input, but does not introduce an explicit parametric form of the warping function, nor any additional hyperparameters. Instead, it constructs the reparameterization directly from the available data points of $x(t, \mb{p})$. Consequently, no explicit choices about the form of $w$ are required, and the reparameterization is fully data-driven within the standard GP framework.
\end{remark}

\subsection{Numerical implementation}
In terms of implementation, one should of course try to match the smoothness of the reparameterization to that of the solution, if known. This is to achieve the maximum overall smoothness for learning while not wasting any effort, as the smoothness strongly influences the rate of convergence (compare \cref{fig:conventional_errors}). For solutions obtained via time integration e.g., dense output often provides an in some sense optimal interpolation that can guide this decision. We want to emphasize however, that the approach works with any data set that allows for an accurate one-dimensional interpolation along the direction containing the rapid variations. In any case, the following steps need to be executed to obtain the reparameterized solution and reparameterization for a fixed parameter combination $\mb{p}$:
\begin{enumerate}
    \item obtain $\tau_\mb{p}$ via time integration,
    \item find $\tau_\mb{p}^{-1}$ using monotonic splines to interpolate the pointwise inverse of $\tau_\mb{p}$,
    \item find the extreme points of the original solution $x$,
    \item define $\sigma_\mb{p}$ using the information from steps 1. and 3.
\end{enumerate}

\begin{remark}
    The first step boils down to a one-dimensional autonomous time integration problem, the cost of which is negligible to both the time integration of the full system, as well as the hyperparameter optimization and solution of linear systems during the learning process. Finding the extreme points of $x$ to moderate precision can be achieved by first finding the local extreme points in the discrete data and then executing a few iterations of a bracketing root finding scheme using the derivative of the accurate one-dimensional interpolant. The overall effort for obtaining the reparameterization is therefore negligible w.r.t.\ the learning process itself.
\end{remark}

\section{Examples}
\label{sec:ex}
Before we demonstrate the benefits of our approach on a number of examples, we briefly recall that we need to learn both the reparameterized solution $\tilde{x}$, as well as the reparameterization $\tilde{\tau}_\mb{p}^{-1}$, to be able to predict both the solution $\hat{\tilde{x}}$ and the physical time $\hat{t}(s) = \hat{\tilde{\tau}}_\mb{p}^{-1}(s)$. This also implies that predicting the solution at precise points in time requires an extra intermediate step. First, both predictions $\hat{\tilde{x}}(s, \mb{p})$ and $\hat{\tilde{\tau}}_\mb{p}^{-1}(s)$ need to be evaluated for a number of $s$ values, and then those predictions need to be interpolated to the precise point in time in a second step. In this case, it also becomes important for the prediction $\hat{\tilde{\tau}}_\mb{p}^{-1}(s)$ to be monotonic in $s$. This is of course true asymptotically, and already at relative $l^2$ errors of around $10^{-3}$ in practice, however it can also be ensured explicitly, e.g.\ by using one of the approaches described in \cite{swiler2020}. When predicting entire solution curves on the other hand, one can simply choose an appropriate equidistant sampling for $s$, since the reparameterization ensures that the predictions are then also sampled appropriately w.r.t.\ the dynamics.

The implementation is written in Julia \cite{bezanson2017}, openly available \cite{forster2025b}, and makes use of the DifferentialEquations.jl package for time integration \cite{rackauckas2017} and the DataInterpolations.jl package for the interpolation routines \cite{bhagavan2024}. In order to achieve a $C^2$ monotonic interpolant where necessary, quintic Hermite splines using finite difference estimates for the derivatives are used. In case the estimates violate the constraints presented in \cite{heß1994}, they are amended in a way to induce only small changes while still fulfilling the conditions.

\subsection{Van der Pol oscillator}
We begin by completing the results for the VDPO from \cref{eq:vdpo}. For the one-dimensional case, where we only considered two different values for the parameter $p$ to distinguish between non-stiff and stiff behavior, the reparameterizations for the different choices of $\sigma$ were already shown in \cref{fig:vdpo_reparameterization} and \cref{fig:vdpo_reparameterizations_splines}. We now also learn the different reparameterized solutions and reparameterizations, to then compare their convergence behavior with that of the conventional approach presented in \cref{fig:conventional_errors}. For this, we refer to the different options for $\sigma$ using their smoothness $\mu$, such that $\mu = \infty$ e.g.\ corresponds to the linear definition of $\sigma$. The kernel smoothness $\nu$ is selected in accordance with that of $\sigma$, except for the linear case, where we also consider $\nu = 0$, since the resulting reparameterized solution looks only $C^0$ visually.

\begin{figure}
    \begin{center}
        \begin{subfigure}{0.47\textwidth}
            \includegraphics[width=\textwidth]{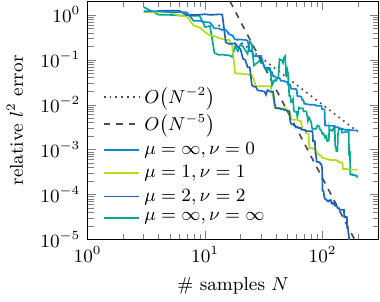}
            \caption{reparameterized solution $\hat{\tilde{x}}$ for $p = 1$}
            \label{fig:vdpo_reparameterized_errors_a}
        \end{subfigure}
        \hspace{\baselineskip}
        \begin{subfigure}{0.47\textwidth}
            \includegraphics[width=\textwidth]{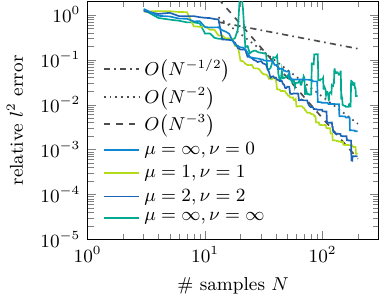}
            \caption{reparameterized solution $\hat{\tilde{x}}$ for $p = 10$}
            \label{fig:vdpo_reparameterized_errors_b}
        \end{subfigure}

        \begin{subfigure}{0.47\textwidth}
            \includegraphics[width=\textwidth]{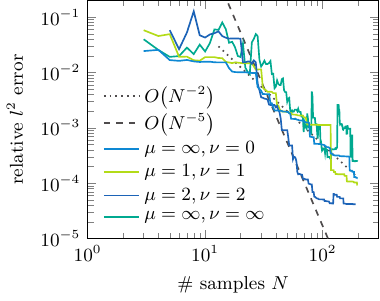}
            \caption{reparameterization $\hat{\tilde{\tau}}^{-1}$ for $p = 1$}
        \end{subfigure}
        \hspace{\baselineskip}
        \begin{subfigure}{0.47\textwidth}
            \includegraphics[width=\textwidth]{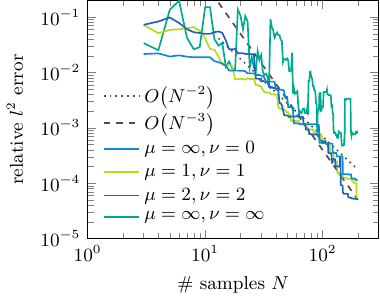}
            \caption{reparameterization $\hat{\tilde{\tau}}^{-1}$ for $p = 10$}
        \end{subfigure}
    \end{center}
    \caption{Convergence of sequential designs for the reparameterized solutions (top) and reparameterizations (bottom), based on different parameter values $p$ and different choices for $\sigma$ (indicated by $\mu$).}
    \label{fig:vdpo_reparameterized_errors}
\end{figure}

For the reparameterized solution $\tilde{x}$ in the non-stiff case with $p = 1$, \cref{fig:vdpo_reparameterized_errors_a} shows the expected behavior, as the results essentially match those for the conventional designs in \cref{fig:conventional_errors}. (The asymptotes are exactly the same.) This plot also showcases the importance of the different choices for $\sigma$, as the linear $\sigma$ does not suffice to match the conventional convergence behavior with a Gaussian ($\nu = \infty$) kernel. In the stiff case with $p = 10$, see \cref{fig:vdpo_reparameterized_errors_b}, the maximum observed rate decreases noticeably from 5 down to 3. However almost all rates, except for that with the Gaussian kernel, are still a factor of 4 to 6 better than those of the conventional designs in \cref{fig:conventional_errors}. To visually emphasize the magnitude of the improvement, \cref{fig:vdpo_reparameterized_errors_b} also contains an asymptote $O(N^{-1/2})$ that exactly matches the one from \cref{fig:conventional_errors}. Lastly, the bottom two plots show the corresponding results when learning the reparameterizations $\tilde{\tau}^{-1}$. The rates roughly match those for the reparameterized solutions $\tilde{x}$, albeit that the initial errors are smaller, note the changed $y$-axes compared to the top two plots.

\begin{figure}
    \begin{center}
        \begin{subfigure}{0.47\textwidth}
            \includegraphics[width=\textwidth]{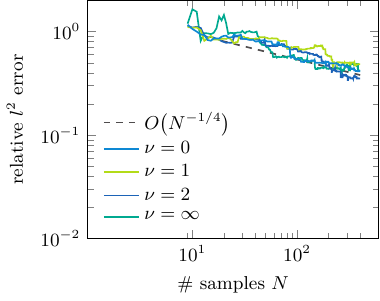}
            \caption{conventional solution $\hat{x}$}
        \end{subfigure}
        \hspace{\baselineskip}
        \begin{subfigure}{0.47\textwidth}
            \includegraphics[width=\textwidth]{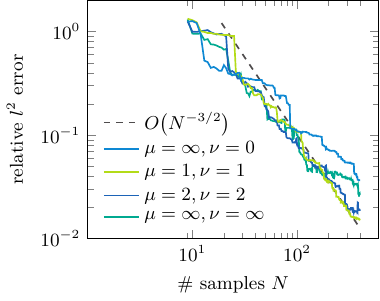}
            \caption{reparameterized solution $\hat{\tilde{x}}$}
        \end{subfigure}

        \begin{subfigure}{0.47\textwidth}
            \includegraphics[width=\textwidth]{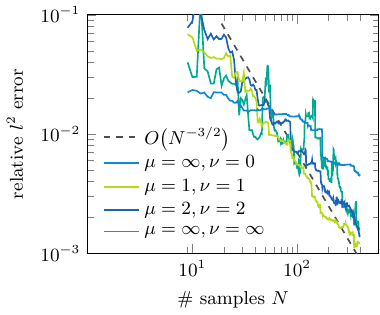}
            \caption{reparameterization $\hat{\tilde{\tau}}_p^{-1}$}
        \end{subfigure}
        \hspace{\baselineskip}
        \begin{subfigure}{0.47\textwidth}
            \includegraphics[width=\textwidth]{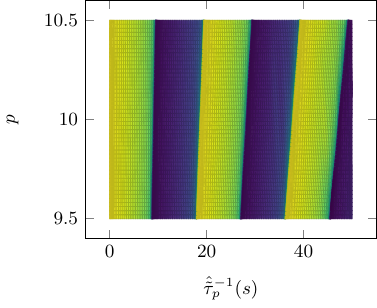}
            \caption{reparameterized prediction $\hat{\tilde{x}}(s, p)$}
        \end{subfigure}
    \end{center}
    \caption{Convergence results for conventional and reparameterized designs including the parameter dependence of the VDPO. Reparameterized prediction based on the design with smallest error ($\mu = 1$), compare the top right of \cref{fig:vdpo_reparameterizations_2D}.}
    \label{fig:vdpo_results_2D}
\end{figure}

Moving to the two-dimensional case, we now also consider the parameter dependence of the VDPO, in particular the interval leading to stiff behavior, for which we already presented a reference and the reparameterization with $\mu = \infty$ in \cref{fig:vdpo_reparameterizations_2D}. For this and all the following examples, we will always choose $\nu_i = \infty$, $2 \leq i \leq n_p$ in the product construction from \cref{sec:gps}. For the error estimate, we always select 10 random parameter combinations along with 100 random points in time each, to form the test data. The corresponding results are collected in \cref{fig:vdpo_results_2D}. Comparing the convergence rates, we observe a similar picture as for the stiff one-dimensional case, however all rates are approximately cut in half. The important thing to note is that all reparameterized designs, except for those based on the exponential kernel, attain rates approximately a factor of 6 greater than the conventional designs. In addition, the bottom right plot shows the reparameterized prediction based on the design with the smallest error ($\mu = 1$). A comparison with the top right plot in \cref{fig:vdpo_reparameterizations_2D} illustrates the visual agreement, in addition to the presented convergence plots.

\subsection{Tunnel diode oscillator}
\begin{figure}
    \begin{center}
        \begin{subfigure}{0.47\textwidth}
            \includegraphics[width=\textwidth]{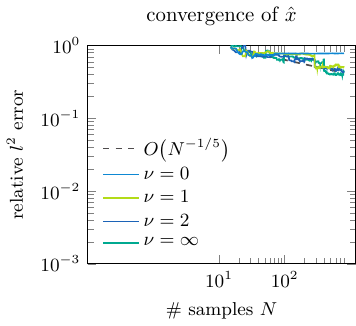}
            \caption{conventional solution $\hat{x}_1$}
        \end{subfigure}
        \hspace{\baselineskip}
        \begin{subfigure}{0.47\textwidth}
            \includegraphics[width=\textwidth]{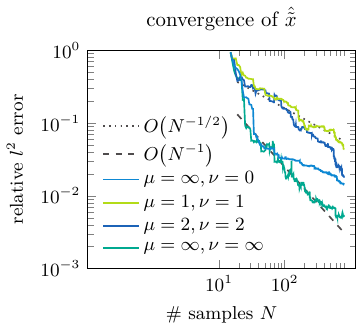}
            \caption{reparameterized solution $\hat{\tilde{x}}_1$}
        \end{subfigure}

        \begin{subfigure}{0.47\textwidth}
            \includegraphics[width=\textwidth]{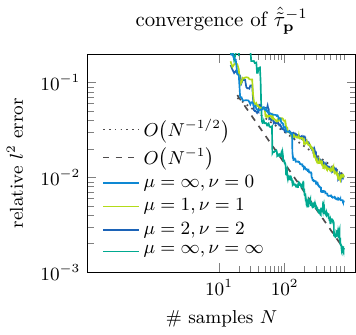}
            \caption{reparameterization $\hat{\tilde{\tau}}_\mb{p}^{-1}$}
        \end{subfigure}
        \hspace{\baselineskip}
        \begin{subfigure}{0.47\textwidth}
            \includegraphics[width=\textwidth]{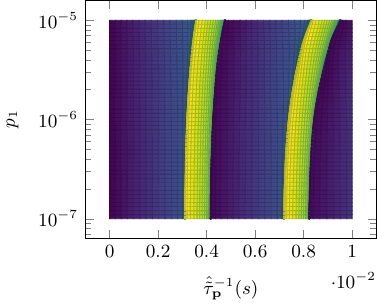}
            \caption{reparameterized prediction $\hat{\tilde{x}}_1(s, p_1)$}
        \end{subfigure}
    \end{center}
    \caption{Convergence results for conventional and reparameterized designs for the TDO. Reparameterized prediction based on the design with smallest error $(\nu = \infty)$, compare the left of \cref{fig:tdo}.}
    \label{fig:tdo_results_i_x=1}
\end{figure}

Stiff systems also arise in the context of circuit simulation. In particular, we look at the following ODE system with parameters $p_1$ and $p_2$ that models a tunnel diode oscillator (TDO)
\begin{align*}
    \dd{t} \begin{bmatrix}
        x_1\\
        x_2
    \end{bmatrix} = \begin{bmatrix}
        \frac{1}{p_1} \big( x_2 - g(x_1) x_1 \big)\\[2pt]
        \frac{1}{p_2} \left( \frac{1}{4} - x_1 - x_2 \right)
    \end{bmatrix}, \quad g(x_1) = 10.8\, x_1^2 - 8.766\, x_1 + 1.8.
\end{align*}
A reference for varying $p_1$ and fixed $p_2 = 3 \cdot 10^{-3}$ was already presented in the introduction in \cref{fig:tdo}. In the following, we learn the solution for $p_1$ varying as in the plot and $p_2 \in [2.9, 3.1] \cdot 10^{-3}$.

\Cref{fig:tdo_results_i_x=1} shows the results for the first solution component in a way similar to \cref{fig:vdpo_results_2D}. Both the convergence rates of the conventional designs in the top left, as well as those of the reparameterized designs end up slightly worse than for the VDPO, however the improvement still amounts to a factor of 5 for the best reparameterized designs. It is interesting to note that the reparameterized design with $\mu = \infty$ and $\nu = \infty$ clearly outperforms the others in this case. One possible explanation is that the original solution already appears only $C^0$ visually around the extreme points, as indicated on the right of \cref{fig:tdo}. The reparameterizations with $\mu < \infty$ might then do more harm than good, when trying to smooth out the behavior around said extreme points. Lastly, the bottom right plot shows the reparameterized prediction for the best design ($\nu = \infty$), to be compared with the left of \cref{fig:tdo}, which again emphasizes the visual agreement on top of the convergence plots.

% on a Dell XPS 13 9310 running Fedora Linux 41
A final point of interest may be the computational times, as we claimed that the reparameterization incurs essentially no overhead for the learning process. The present three-dimensional example now allows us to compare the average wall times of the different designs, as enough different reparameterizations needed to be computed, around 130 in total, for the different parameter combinations requested by the designs. In the end, the conventional designs took about $500\, \mr{s}$, whereas the reparameterized designs needed approximately $550\, \mr{s}$, supporting our argument that the extra effort for the reparameterizations is negligible, especially when compared to the provided benefits.

\subsection{Brusselator}
Lastly, we consider a classical example from chemical kinetics, the Brusselator \cite{hairer1993}. The system is given by
\begin{align*}
    \dd{t} \begin{bmatrix}
        x_1\\
        x_2
    \end{bmatrix} = \begin{bmatrix}
        p_1 - (p_2 + p_4) x_1 + p_3 x_1^2 x_2\\
        p_2 x_1 - p_3 x_1^2 x_2
    \end{bmatrix}
\end{align*}
with four parameters $p_1 \in [0.99, 1.01]$, $p_2 \in [2.97, 3.03]$, $p_3 \in [0.99, 1.01]$ and $p_4 \in [0.99, 1.01]$. \Cref{fig:brus_results_i_x=1} shows the results for the first solution component once more.

\begin{figure}
    \begin{center}
        \begin{subfigure}{0.47\textwidth}
            \includegraphics[width=\textwidth]{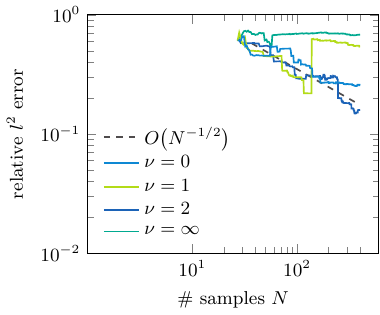}
            \caption{conventional solution $\hat{x}_1$}
        \end{subfigure}
        \hspace{\baselineskip}
        \begin{subfigure}{0.47\textwidth}
            \includegraphics[width=\textwidth]{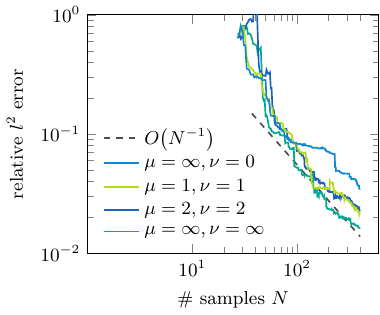}
            \caption{reparameterized solution $\hat{\tilde{x}}_1$}
        \end{subfigure}

        \begin{subfigure}{0.47\textwidth}
            \includegraphics[width=\textwidth]{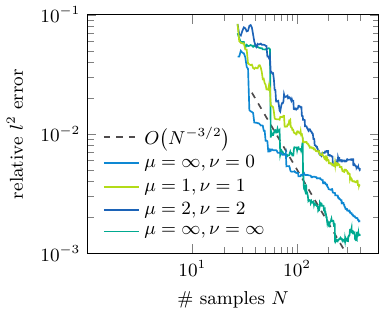}
            \caption{reparameterization $\hat{\tilde{\tau}}_\mb{p}^{-1}$}
        \end{subfigure}
        \hspace{\baselineskip}
        \begin{subfigure}{0.47\textwidth}
            \includegraphics[width=\textwidth]{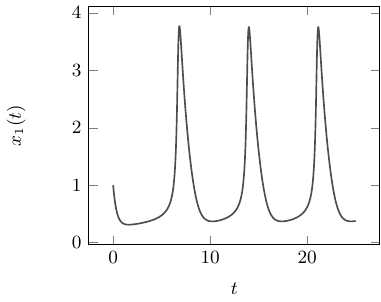}
            \caption{original solution $x_1$}
        \end{subfigure}
    \end{center}
    \caption{Convergence results for conventional and reparameterized designs for the Brusselator. Original solution for $\mb{p} = [1, 3, 1, 1]^\T$.}
    \label{fig:brus_results_i_x=1}
\end{figure}

We again observe significantly improved rates of convergence, this time however only by around a factor of 2-3, as the best rates of the conventional designs are noticeably better than for the previous examples ($1/2$ compared to $1/5$ and $1/4$ before). This can be explained by the observation that the Brusselator solution is, in a precise sense, less nonstationary than the previous examples. To quantify this, consider the following max-to-mean ratio (MMR) that measures the range of variation, and therefore the degree of nonstationarity, of a one-dimensional ODE solution
\begin{align*}
    r = |t_\mr{f} - t_0| \frac{\max_{t \in [t_0, t_\mr{f}]} \big\vert f \big( x(t), t \big) \big\vert}{\int_{t_0}^{t_\mr{f}} \big\vert f \big( x(s), s \big) \big\vert\, \mr{d}s}.
\end{align*}
A value of $r = 1$ indicates a completely stationary function and larger values of $r$ indicate more nonstationarity. \Cref{tab:nonstationarity} collects the MMRs of the original and reparameterized solutions for the different examples we considered and illustrates that the Brusselator is in fact significantly less nonstationary to begin with than the other examples, although it still clearly exhibits regions of rapid variation, as can be seen on the bottom right of \cref{fig:brus_results_i_x=1}. The table also suggests that the MMR can serve as a simple measure for testing whether the presented approach is worthwhile for a given problem.

% $326 \pm 211\, \mr{s}$ and $287 \pm 187\, \mr{s}$

\begin{table}
    \caption{Factors of improvement in the convergence rates for the different examples, depending on the MMRs of the original solutions ($r$) and those of the reparameterized solutions ($\tilde{r}$) with $\mu = \infty$.}
    \label{tab:nonstationarity}
    \begin{center}
        \begin{tabular}{lcccc}
            \toprule
            example & & $r$ & $\tilde{r}$ & factor\\
            \midrule
            VDPO ($p = 1$) & & 2.3 & 1.4 & 1\\
            Brusselator & & 11 & 2.7 & 2-3\\
            VDPO ($p = 10$) & & 35.5 & 5.9 & 6\\
            TDO & & 467.3 & 1 & 5\\
            \bottomrule
        \end{tabular}
    \end{center}
\end{table}

\section{Conclusions and outlook}
\label{sec:cao}
In this article, we introduced a workflow for reparameterizing stiff ODE solutions, so that they shall be learned efficiently with standard GP implementations. As argued in \cref{sec:crs}, the reparameterizations are easy and cheap to construct and they significantly improve the convergence rates (w.r.t.\ the number of samples) for a variety of examples, as illustrated in \cref{sec:ex}. The improvements vary depending on the problem, but mostly fall between a factor of 2-6 in the convergence rate if a kernel with smoothness larger than $\nu = 0$ is selected for the learning. \Cref{tab:nonstationarity} also showed that the differences between the examples can be attributed to different levels of nonstationarity in the ODE solutions.

Regarding future work, one interesting avenue is extending the approach to multiple reparameterized dimensions $\mb{t} = [t_1, t_2]^\T$, such that multidimensional regions of rapid variation may be handled as well. In principle, this could be achieved by simply stacking multiple one-dimensional reparameterizations, extending the definition from \cref{eq:reparameterization} to
\begin{align*}
    \tilde{\tau}_\mb{p}^{-1}(\mb{t}) = \begin{bmatrix}
        \tilde{\tau}_{t_2, \mb{p}}^{-1}(t_1)\\[3pt]
        \tilde{\tau}_{t_1, \mb{p}}^{-1}(t_2)
    \end{bmatrix}.
\end{align*}
Here, $\tilde{\tau}_{t_2, \mb{p}}^{-1}(t_1)$ denotes an inverse only w.r.t.\ $t_1$, analogous to the notation in \cref{sec:crs}, and treats $t_2$ as being fixed. As the learning process only adds single points $(\mb{t}_i, \mb{p}_i)$ at a time, we can thus compute the inverses one by one, such that the approach remains computationally feasible. While this still requires a fine grid of data for the reparameterized dimensions $\mb{t}$, the advantage of more easily considering additional parameters $\mb{p}$ remains. An extension of this form could e.g.\ be interesting in the context of partial differential equation solutions or parameter dependent bifurcations.

Separately, one could also improve upon learning each solution component individually by e.g.\ first performing a principal component analysis for identifying relationships in the solution data. Finally, it might be interesting to apply the approach to larger examples, to see how well it performs and to exploit further benefits of the GP setting, such as easy access to uncertainty quantification or (Bayesian) optimization.

\appendix
\section{Proofs}
\begin{proposition}
    \label{prop:cubic_spline}
    Given a cubic Hermite spline $H: [s_0, s_1] \to \mathbb{R}$, $s_0 < s_1$ with interpolation conditions of the form
    \begin{alignat*}{3}
        H(s_0) &= h_0, &\quad H(s_1) &= h_1\\
        H'(s_0) &= 0, &\quad H'(s_1) &= 0
    \end{alignat*}
    and $h_0 < h_1$, $H$ is strictly monotonically increasing.
\end{proposition}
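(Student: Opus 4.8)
The plan is to write $H$ explicitly on the normalized interval and show its derivative is nonnegative, vanishing only at the two endpoints. First I would reparameterize via $u = (s - s_0)/(s_1 - s_0) \in [0,1]$, so that $H$ becomes the unique cubic with $H = h_0$ at $u=0$, $H = h_1$ at $u=1$, and vanishing derivative at both ends. Using the standard cubic Hermite basis functions, this gives the closed form
\begin{equation*}
    H(u) = h_0 + (h_1 - h_0)\bigl(3 u^2 - 2 u^3\bigr),
\end{equation*}
since the two basis functions associated with the endpoint derivatives drop out. Differentiating yields
\begin{equation*}
    \dd{u} H(u) = 6 (h_1 - h_0)\, u (1 - u).
\end{equation*}

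The key observation is then immediate: $h_1 - h_0 > 0$ by hypothesis, and $u(1-u) > 0$ for all $u \in (0,1)$, so $\dd{u} H > 0$ on the open interval, with equality only at $u \in \{0, 1\}$. Since $u \mapsto s$ is an affine increasing change of variables, the chain rule gives $H'(s) > 0$ on $(s_0, s_1)$ as well, and $H$ is therefore strictly monotonically increasing on $[s_0, s_1]$ (a continuous function with strictly positive derivative on the interior of an interval is strictly increasing on the closed interval).

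I do not anticipate a genuine obstacle here; the only point requiring any care is making the reduction to the two-term form of $H$ fully rigorous—namely, citing uniqueness of the cubic Hermite interpolant (as in \cite{curry1988}) so that writing $H$ via the basis expansion is justified, and confirming that the two endpoint-derivative basis functions indeed contribute nothing because both prescribed derivative values are zero. Everything after that is the one-line sign argument above. An alternative, if one prefers to avoid invoking the basis explicitly, is to note that $H'$ is a quadratic with roots at $s_0$ and $s_1$ (forced by the two homogeneous derivative conditions), hence $H'(s) = c\,(s - s_0)(s_1 - s)$ for some constant $c$; integrating and matching $H(s_1) - H(s_0) = h_1 - h_0 > 0$ fixes the sign of $c$ as positive, and strict monotonicity follows. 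Either route is short; I would present the explicit-formula version for concreteness.
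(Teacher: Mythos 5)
Your proposal is correct, and it is essentially the paper's argument: both hinge on the fact that $H'$ is a quadratic vanishing at $s_0$ and $s_1$, whose overall sign is forced to be positive on the interior by $h_1 > h_0$ (the paper fixes the sign via the mean value theorem, you via the explicit Hermite-basis formula $H'(u) = 6(h_1-h_0)\,u(1-u)$, and your sketched ``alternative'' is the paper's proof almost verbatim). The computations check out, so no changes are needed.
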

\begin{proof}
    The derivative $H'$ is a quadratic polynomial with roots in $s_0$ and $s_1$ and since $h_0 < h_1$, there exists $s \in [s_0, s_1]$ with $H'(s) = \frac{h_1 - h_0}{s_1 - s_0} > 0$ by the mean value theorem, thus $H'(s) > 0$ for all $s \in [s_0, s_1]$.
\end{proof}

\begin{proposition}
    \label{prop:quintic_spline}
    Given a quintic Hermite spline $H: [s_0, s_1] \to \mathbb{R}$, $s_0 < s_1$ with interpolation conditions of the form
        \begin{alignat*}{3}
        H(s_0) &= h_0, &\quad H(s_1) &= h_1\\
        H'(s_0) &= 0, &\quad H'(s_1) &= 0\\
        H''(s_0) &= 0, &\quad H''(s_1) &= 0
    \end{alignat*}
    and $h_0 < h_1$, $H$ is strictly monotonically increasing.
\end{proposition}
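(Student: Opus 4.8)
The plan is to mirror the proof of \cref{prop:cubic_spline}, using the factored form that the prescribed interpolation conditions force on the derivative $H'$. Since $H$ is a quintic polynomial, $H'$ has degree at most $4$. The conditions $H'(s_0) = H'(s_1) = 0$ together with $H''(s_0) = H''(s_1) = 0$ say exactly that $s_0$ and $s_1$ are each roots of $H'$ of multiplicity at least two. A polynomial of degree at most $4$ having $s_0$ and $s_1$ as double roots must therefore be of the form $H'(s) = c\,(s - s_0)^2 (s - s_1)^2$ for some constant $c \in \mathbb{R}$.

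Next I would fix the sign of $c$ by integrating over $[s_0, s_1]$ and using $H(s_1) - H(s_0) = h_1 - h_0 > 0$:
\begin{align*}
    0 < h_1 - h_0 = \int_{s_0}^{s_1} H'(s)\, \mr{d}s = c \int_{s_0}^{s_1} (s - s_0)^2 (s - s_1)^2\, \mr{d}s.
\end{align*}
The integrand on the right is nonnegative and not identically zero, so its integral is strictly positive, which forces $c > 0$. With $c > 0$ the factorization gives $H'(s) > 0$ for every $s \in (s_0, s_1)$, while $H'$ vanishes only at the two endpoints; hence $H$ is strictly monotonically increasing on $[s_0, s_1]$, as claimed.

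I do not anticipate a genuine obstacle here; the argument is short and parallels the cubic case closely. The only mildly delicate point is that $H'$ is permitted to vanish at the interval endpoints, so one must note that strict monotonicity still follows because $H'$ is strictly positive on the \emph{open} interval $(s_0, s_1)$. As an alternative route, one could reduce to the canonical case $[s_0, s_1] = [0, 1]$, $h_0 = 0$, $h_1 = 1$ via an affine change of variables in both the domain and the range (valid since $h_1 - h_0 > 0$), observe that $H$ is then the standard quintic smoothstep $6 s^5 - 15 s^4 + 10 s^3$, and read off $H'(s) = 30\, s^2 (s - 1)^2 \geq 0$ directly.
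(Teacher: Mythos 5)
Your proof is correct and takes essentially the same route as the paper's: both arguments come down to showing that the endpoint conditions force $H'(s) = c\,(s-s_0)^2(s-s_1)^2$ with $c > 0$, so that $H'$ is strictly positive on the open interval and vanishes only at the two double roots. The only difference is cosmetic --- the paper normalizes to $[0,1]$, solves the linear system for the spline coefficients explicitly and factorizes to read off $H'(s) = 30(h_1-h_0)\,s^2(s-1)^2$, whereas you obtain the factored form directly from root multiplicities and degree counting and then fix the sign of $c$ by integrating $H'$ over $[s_0, s_1]$, which avoids both the change of variables and the explicit coefficient computation.
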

\begin{proof}
    We prove the claim by computing the roots of $H$ and then using a similar argument as in the proof of \cref{prop:cubic_spline}. In the following, we work with $s_0 = 0$, $s_1 = 1$. The general case can be reverted back to this one by introducing an additional mapping $\varsigma(s) = \frac{s - s_0}{s_1 - s_0}$ and considering $H \circ \varsigma$ instead.

    We first state the ansatz and the first two derivatives for a quintic polynomial with coefficients $c_0, \dotsc, c_5 \in \mathbb{R}$
    \begin{align*}
        H(s) &= c_5 s^5 + c_4 s^4 + c_3 s^3 + c_2 s^2 + c_1 s + c_0\\
        H'(s) &= 5 c_5 s^4 + 4 c_4 s^3 + 3 c_3 s^2 + 2 c_2 s + c_1\\
        H''(s) &= 20 c_5 s^3 + 12 c_4 s^2 + 6 c_3 s + 2 c_2.
    \end{align*}
    Inserting the interpolation conditions, solving the resulting system for the coefficients and factorizing gives
    \begin{align*}
        H'(s) = 30 (h_1 - h_0) s^2 (s - 1)^2.
    \end{align*}
    Thus there are two double roots, in $0$ and $1$ or more generally in $s_0$ and $s_1$, and an analogous argument as in the proof of \cref{prop:cubic_spline} yields the claim.
\end{proof}

In the following, we provide the proof for \cref{prop:smoothness_1D}.
\begin{proof}
    \label{pr:smoothness_1D}
    For any $s \in [0, 1]$, the inverse function rule gives
    \begin{align*}
        \dd{s} \tilde{\tau}^{-1}(s) = \dd{s} \big( \tau^{-1} \circ \sigma(s) \big) = \frac{\sigma'(s)}{\tau'(t)\vert_{t = \tilde{\tau}^{-1}(s)}}
    \end{align*}
    and therefore $\tilde{\tau}^{-1} \in C^{\min(\nu, \mu)}$, since $\tau \in C^\nu$ by construction and $\tau' > 0$. The other claim then follows from $\tilde{x} = x \circ \tilde{\tau}^{-1}$.
\end{proof}

The following argument gives the proof for \cref{prop:smoothness}.
\begin{proof}
    \label{pr:smoothness}
    We first show that $\sigma_{\cdot}(s) \in C^\kappa$ for all extensions $\sigma_p(s)$ of the $\sigma$ described in \cref{sec:crs}. For this, we note that both for the linear definition, as well as for the definitions based on Hermite splines, $\sigma_p$ is always linear in $\tau_p$ and that there is no further dependence of $\sigma_p$ on $p$. The definition of $\tau_p$ then immediately yields this first claim.

    In a second step, we observe
    \begin{align*}
        \pp{s} \tilde{\tau}_p^{-1}(s) = \pp{s} \big( \tau_p^{-1} \circ \sigma_p(s) \big) = \frac{\pp{s} \sigma_p(s)}{\pp{t} \tau_p(t)\vert_{t = \tilde{\tau}_p^{-1}(s)}}
    \end{align*}
    based on the inverse function rule, which yields $\tilde{\tau}_p^{-1}(\cdot) \in C^{\min(\nu, \mu)}$, analogous to the previous proof. On the other hand we also have
    \begin{align*}
        \pp{p} \tilde{\tau}_p^{-1}(s) = \pp{p} \big( \tau_p^{-1} \circ \sigma_p(s) \big) = \frac{\pp{p} \sigma_p(s)}{\pp{p} \tau_p(t)\vert_{t = \tilde{\tau}_p^{-1}(s)}},
    \end{align*}
    which similarly gives $\tilde{\tau}_\cdot^{-1}(s) \in C^\kappa$. The remaining claims now follow from the definition of $\tilde{x}(s, p)$.
\end{proof}

% \section*{Acknowledgments}

\bibliographystyle{siamplain}
\bibliography{biblio}
\end{document}